\newcommand\fk[1]{\mathfrak #1}
\newcommand{\bC}{\mathbb{C}}
\newcommand{\cO}{\mathcal{O}}
\newcommand\la{\lambda}
\newcommand\ie{\textit{i.e. }}
\newcommand\restr[2]{{
  \left.\kern-\nulldelimiterspace 
  #1 
  \vphantom{\big|} 
  \right|_{#2} 
  }}
\newtheorem{theorem}{Theorem}[section]
\newtheorem*{theorem*}{Theorem}
\newtheorem{thm}{Theorem}
\newtheorem{proposition}[theorem]{Proposition}
\newtheorem*{proposition*}{Proposition}
\newtheorem{lemma}[theorem]{Lemma}
\newtheorem*{lemma*}{Lemma}
\newtheorem{corollary}[theorem]{Corollary}
\newtheorem*{corollary*}{Corollary}
\newtheorem*{definition*}{Definition}
\newtheorem*{remark*}{Remark}
\newtheorem{example}[theorem]{Example}
\g@addto@macro{\UrlBreaks}{\UrlOrds}
\begin{document}
\title{Finite type multiple flag varieties of exceptional groups}
\author[D. Barbasch, S. Da Silva, B. Elek, G.G. Krishnan]
{Dan Barbasch, Sergio Da Silva, Bal\'{a}zs Elek,\\
Gautam Gopal Krishnan}

\address{Dan Barbasch, Cornell University, Ithaca NY}
\email{dmb14@cornell.edu }

\address{Sergio Da Silva, Cornell University, Ithaca NY}
\email{smd322@cornell.edu}

\address{Bal\'{a}zs Elek, Cornell University, Ithaca NY}
\email{be94@cornell.edu }

\address{Gautam Gopal Krishnan, Cornell University, Ithaca NY}
\email{gk379@cornell.edu  }

\thanks{Research supported in part by an NSERC PGS-D3 Scholarship, and an
NSA grant H98230-16-1-0006}
\maketitle

\begin{abstract}
Consider a simple complex Lie group $G$ acting diagonally on a triple
flag variety $G/P_1\times G/P_2\times G/P_3$, where $P_i$ is parabolic
subgroup of $G$. We provide an algorithm for systematically checking
when this action has finitely many orbits. We then use this method to
give a complete classification for when $G$ is of type $F_4$. The
$E_6, E_7,$ and $E_8$ cases will be treated in a subsequent paper.  
\end{abstract}
{\hypersetup{linkcolor=black}\tableofcontents}

Let $G$ be a simple complex Lie group, and fix a maximal torus $T$ as
well as a Borel subgroup $B$ containing $T$. Let $P\supseteq B$ be a
parabolic subgroup. Then $G/P$ is a projective algebraic variety
called a partial flag variety. By the Bruhat decomposition, $B$ acts
on $G/P$ with finitely many orbits indexed by $W/W_P$, where 
$W=N_G(T)/T$ is the Weyl group of $G$ and $W_P$ is the Weyl group of
$P$. The closures of these orbits $X^w:=\overline{BwP/P}$ are called
Schubert varieties. They play an important role in the 
interplay between geometry and representation theory.  An
equivalent formulation is to study $G$-orbits on $G/B\times G/B$.

A number of constructions can be made canonical by identifying $W$ with
the $G$-diagonal orbits in $G/B\times G/B$ (see \cite{CG}, Theorem
3.1.9. for example). In this context, a Schubert variety can be
thought of as the closure of one of the finitely many $G$-orbits in
$G/B\times G/B$ . One can then ask about the natural extension of this
construction: when does the diagonal $G$-action on $G/P_1\times \ldots
\times G/P_k$ have finitely many orbits?  

In \cite{MWZ1}, Magyar, Weyman and Zelevinsky give a complete answer
to this question for $G=GL_n(\bC)$ while also providing
representatives for the orbits in each case. A key aspect in their
approach is identifying $G/P$ with a partial flag variety 
$Fl_P=\{ \{0\}\subset V_1\subset \ldots \subset V_k \subset \bC^n
\}$. Using a similar method, they extend these results to the type $C$
case in \cite{MWZ2}. In both instances, this partial flag variety
interpretation is used in an essential way to obtain the
classification results. 

The case $P_1=\ldots =P_k=P$ has been studied by Popov \cite{P} and
Devyatov \cite{D}, where they also consider the question when a
variety $(G/P)^k$ has an open orbit under the diagonal action of $G$. 

Littelmann, in \cite{L}, considers the case when such a multiple flag
variety is spherical, that is, when it has an open orbit under the
action of $B$. This is naturally equivalent to choosing $P_1=B$ in our
setup. Also note that by the results of Brion \cite{B}, spherical
varieties have only finitely many $B$-orbits.  

The references \cite{AP},\cite{HNOO}, \cite{L} and \cite{S} deal with
aspects of the geometry of these orbits and representation theory.
The methods employed in each of these articles do not readily extend
to the exceptional groups. We had to therefore take a more
direct approach for computing the orbits of the diagonal $G$-action in
question. We designed an algorithm, detailed in Section
\ref{algorithm}, which leads to a complete classification summarized
in the theorem below. It is a synthesis of Propositions
\ref{infinitemax}, \ref{finitemax}, \ref{nonmax2} and Corollary
\ref{nonmax1}. 

\begin{thm}
If $G$ is of type $F_4$, then the diagonal $G$-action on  $G/P_1\times
G/P_2\times G/P_3$ has finitely many orbits iff
$(P_1,P_2,P_3)=(B_3,C_3,P_{max})$ up to permutation. Here $P_{max}$ is
any maximal parabolic subgroup of $G$. 
\end{thm}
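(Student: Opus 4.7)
The plan is to partition the analysis according to whether the $P_i$ are maximal, and to leverage $G$-equivariant projections to reduce as much as possible to the maximal case.

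First I would establish a general reduction lemma: for any inclusions $P_i \subseteq Q_i$, the natural surjection
\[
G/P_1 \times G/P_2 \times G/P_3 \twoheadrightarrow G/Q_1 \times G/Q_2 \times G/Q_3
\]
is $G$-equivariant, so finitely many orbits on the source force finitely many on the target. In particular, if $(P_1,P_2,P_3)$ admits only finitely many $G$-orbits, then so does every triple of maximal parabolics containing it, and contrapositively, if some maximal envelope has infinitely many orbits then so does the original triple.

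Next I would dispatch the maximal case. In type $F_4$ there are four maximal parabolics (one per simple root), giving twenty unordered triples to test. Here I would invoke Propositions \ref{infinitemax} and \ref{finitemax}, which apply the algorithm of Section \ref{algorithm} to each candidate and isolate precisely the triples $(B_3,C_3,P_{max})$ (for any maximal $P_{max}$) as those with finitely many orbits. This settles the theorem whenever all $P_i$ are maximal.

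The remaining and most delicate step is to rule out every non-maximal configuration. Suppose some $P_i$ is non-maximal. If there exists a maximal envelope $(Q_1,Q_2,Q_3)$ not of the form $(B_3,C_3,P_{max})$, then the projection argument immediately gives infinitely many orbits. The difficulty lies in the borderline cases where \emph{every} maximal envelope happens to be of the finite form; this forces the non-maximal $P_i$ to sit inside a very restricted intersection (typically $B_3\cap C_3$, whose Levi is of smaller rank), and projection is powerless. For exactly these borderline situations I would apply Proposition \ref{nonmax2} and Corollary \ref{nonmax1}, which exhibit explicit one-parameter families of $G$-orbits that serve as witnesses to infiniteness. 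I expect this last step to be the main obstacle, since it cannot be handled formally and requires a genuinely geometric construction to distinguish finitely many from infinitely many orbits in the near-finite regime. Combining the three steps yields the biconditional of the theorem.
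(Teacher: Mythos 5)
Your skeleton is essentially the paper's: your reduction lemma is Lemma \ref{lemma}, the maximal classification is Propositions \ref{infinitemax} and \ref{finitemax}, and the genuinely borderline triples are Proposition \ref{nonmax2}. But there is a concrete gap in your third step. The borderline set---triples for which \emph{every} maximal envelope is of the finite form $(B_3,C_3,P_{max})$---is exactly the set of triples $(B_3,C_3,Q)$ with $Q$ an \emph{arbitrary} non-maximal parabolic, including $Q$ with Levi of type $A_1$ and $Q=B$ itself. Proposition \ref{nonmax2} covers only $Q$ of type $B_2$, $A_2$ and $A_1\times A_1$, and Corollary \ref{nonmax1} concerns only the doubled triples $(B_3,B_3,\cdot)$ and $(C_3,C_3,\cdot)$, so as cited your argument never establishes infinitude for $(B_3,C_3,A_1)$ or $(B_3,C_3,B)$, which the theorem requires. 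The paper closes exactly this hole with the dimension count of Section \ref{dimension}, which your proposal omits entirely: here $\dim(G/P_1)+\dim(G/P_2)+\dim(G/Q)\geq 15+15+23=53>52=\dim G$, so there is no open orbit and hence infinitely many orbits. (Alternatively, your own reduction lemma repairs it if you allow \emph{non-maximal} envelopes: every $A_1$-parabolic and $B$ is contained in a parabolic of type $B_2$, $A_2$ or $A_1\times A_1$, and those triples are infinite by Proposition \ref{nonmax2}.)

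Two smaller inaccuracies in the same direction. First, the dimension count also does real work in your maximal step: the paper does not run the algorithm on all twenty unordered maximal triples; the ten with at least two slots of type $A_2(l)\times A_1(s)$ or $A_1(l)\times A_2(s)$ have total dimension at least $55>52$ and are eliminated for free, the triple-$B_3$ and triple-$C_3$ cases are quoted from \cite{D}, and Propositions \ref{infinitemax} and \ref{finitemax} treat only the surviving ten. Second, Corollary \ref{nonmax1} is not a borderline, direct-computation result as you frame it---it is precisely an instance of your projection argument, since the envelopes $(B_3,B_3,P_{max})$ and $(C_3,C_3,P_{max})$ are already infinite by Proposition \ref{infinitemax}; relatedly, your heuristic that borderline cases force $P_i$ into $B_3\cap C_3$ (a $B_2$-parabolic) is not right: the borderline condition constrains the \emph{other} two slots to be the $B_3$- and $C_3$-parabolics and places no constraint on $Q$ beyond non-maximality (e.g.\ $Q$ of type $A_2(l)$ is borderline but does not lie in $B_3\cap C_3$).
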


Section \ref{dimension} outlines an easy dimension count which shows
that $G$ cannot have an open orbit on products of four or more flag
varieties. 

Section \ref{algorithm} details the method we
use. The general idea is as follows: First, any orbit has a
representative $(eP_1,xP_2,yP_3)$. The stabilizer of the first
coordinate is $P_1,$ and this group acts on the remaining two coordinates;
each orbit has a representative with ${x=w\in W(P_1)\backslash W/W(P_2)}.$ Finally
the stabilizer of the first two coordinates, $S_{12}(w)\subset P_1$ acts on the third
coordinate. Each of the cells $P_1vP_3$ with ${v\in W(P_1)\backslash
W/W(P_3)}$ is stabilized by $S_{12}(w),$ 
and it remains to compute the orbits. This reduces to a calculation of
the orbits of $S_{12}$ on a vector space $V_{13}$. The group has a Levi
decomposition $S_{12}(w)=M_{12}(w)N_{12}(w)$. 

Then $V_{13}$ has a natural grading whose associated filtration is compatible with that of
$S_{12}(w).$ The action is such that $M_{12}(w)$ preserves
the grading while $N_{12}(w)$ moves it \textit{strictly upwards}. The
cases when there are finitely many orbits of $M_{12}(w)$ on each level
are determined from existing literature. The action of $N_{12}(w)$ is determined using a computer program.

Section \ref{compute} then presents sample calculations
from the algorithm. The computer code used to execute most of the
algorithm can be found at the following link:\\  

\begin{sloppypar}
\url{https://cocalc.com/projects/4bbd59fb-232c-4eeb-8b7a-f52d36ef3351/files/FTMF.sagews}
\end{sloppypar}

\section{Using a Dimension Count to Eliminate Cases }\label{dimension}
Let us first observe that if $G$ acts on \mbox{$G/P_1\times \ldots\times G/P_k$} with finitely many
  orbits, then there must be an open orbit and necessarily 
\[\dim(G)\geq \dim(G/P_1\times \ldots \times G/P_k) = \sum_{i=1}^k \dim(G/P_i) .\]

This already lets us eliminate a number of cases.
  For instance, in the fairly trivial case that $G$ is of type $G_2$, we find that $k\leq 2$, and the question is completely answered by the Bruhat decomposition. 

Let us consider the case when $G$ is of type $F_4$. Fix a Borel
subgroup $B$ and a Cartan subgroup $H\subset B.$ Then
$\dim(G)=52$. Parabolic subgroups containing $B$  are indexed by subsets of the
Dynkin diagram for $F_4$. This leads to the following list of
parabolic subgroups and their dimensions: 

\[ (\emptyset,28), (A_1, 29), (A_1\times A_1, 30),(A_2, 31), (B_2, 32),\]
\[(A_2\times A_1,32), (B_3, 37), (C_3, 37)\]

The $A_i, B_i, C_i$ refer to the Dynkin diagrams of the
Levi components. At this stage we do not distinguish between long and short roots in
the list since this does not affect dimension. The possibilities for
the dimension of $G/P$ are therefore:

\[24,23,22,21,20,15.\]

Since $\dim(G/P_1\times \ldots \times G/P_k) \geq 60$ for any $k\geq
4$, we only need to consider $k=2,3$. 
The case $k=2$ is already done by the Bruhat decomposition, which
leaves only triples of parabolic subgroups to classify. The only triples $(P_1,P_2,P_3)$ of parabolic subgroups that need to be considered (up to permutation) are those for which

\[ \dim(G/P_1)=\dim(G/P_2)=15 \text{ , } \dim(G/P_3)= 15,20,21 \text{ or } 22.\]

\section{An Algorithm for Computing Orbits}\label{algorithm}

Let $G$ be a connected simple linear algebraic group acting diagonally
on the variety $G/P_1\times G/P_2\times G/P_3$. Recall the fixed pair $(B,H)$ of
a Borel subgroup and Cartan subgroup, and assume that the $P_i$ are
standard, \ie $B\subset
P_i$. Denote by $\fk g, \fk p_i,\fk b, \fk h$ and so on the corresponding Lie
algebras. For each $P_i$ we choose an element $\la_i\in\fk h$ that
determines it, in the sense that $P_i=L_iN_i$ where $H\subset L_i$ is
a Levi component, and $N_i$ the unipotent radical so that the roots in
$L_i$ are zero, and the roots in $N_i$ are positive on
$\la_i.$

For each orbit we determine a representative by the steps that follow. This representative is unique, so there are finitely many
orbits if and only if we obtain finitely many representatives. We call these \textit{distinguished representatives}

\begin{enumerate}

\item 
{The action of $G$ on $G/P_i$ is transitive; there is only one
  orbit $P_i.$ The diagonal action of the group $G$ on a product of two flag
  varieties, either 
  $G/P_1\times G/P_2$ or  $G/P_1\times G/P_3$, has finitely many
  orbits which are given by the Bruhat decomposition. For every orbit
  we choose the coset representative $P_1\times gP_i.$ The stabilizer
   is $P_1,$ and it acts on 
    $G/P_2\times G/P_3$. The generalized Bruhat decompositions are   $G=\cup P_1wP_2$
and 
  $G=\cup P_1vP_3$ with  $w\in
  W_{P_1}\backslash W/W_{P_2}$ and $v\in W_{P_1}\backslash W/W_{P_3}$.
 Therefore each orbit has a representative  of the form
 $\{P_1\}\times \{p_1wP_2\}\times \{q_1vP_3\}$ with $p_1,q_1\in P_1$. We refer to the representative in $G/P_i$ as
the $i$-th coordinate. The
   stabilizer of $\{P_1\}$, which is $P_1$, acts on the second and
   third coordinate diagonally. For the orbits of
   $P_1$ in the second
   coordinate we choose the cosets $wP_2$; so each orbit has a
   representative $\{P_1\}\times\{wP_2\}\times \{q_1 vP_3\}$. The stabilizer
in $P_1$ of the first two \textit{coordinates}
is $S_{12}(w):=P_1\cap wP_2w^{-1}$. This stabilizer acts on
$P_1vP_3\subset G/P_3$  with 
finitely many orbits if and only if  $S_{12}(w)$ acts with finitely many
orbits on $\cO(v):=P_1vP_3$ for all 
\[
(w,v)\in  W_{P_1}\backslash W/W_{P_2}\times W_{P_1}\backslash W/W_{P_3}.
\]

\item 
Write  $P_1=M_1N_1$, the Levi decomposition corresponding to $\la_1.$  
The group $S_{12}(w)\subset P_1$ inherits a decomposition
\[S_{12}(w)=M_{12}(w)N_{12}(w).\]
Similarly write 
\[S_{13}(v)=P_1\cap vP_3v^{-1}=M_{13}(v)N_{13}(v).\] 
$M_{12}(w)$ and $M_{13}(v)$ are (standard) parabolic subgroups of
$M_1.$ We write the Levi decompositions as $M_{12}(w)=M_{12}(w)^0M_{12}(w)^+$ and 
${M_{13}(v)=M_{13}(v)^0M_{13}(v)^+}$ corresponding to $w\la_2$
and $v\la_3$ respectively. Then 
\[\indent\indent
P_1vP_3\cong \big[P_1/S_{13}(v)\big] vP_3\cong \big[M_1/M_{13}(v)\times
N_1/N_{13}(v)\big] vP_3.
\]  
\item The action of $S_{12}(w)$ is compatible with the above
  description. 
Since $M_{12}(w),M_{13}(v)\subset M_1$ are parabolic subgroups,
 $M_1$ has a Bruhat decomposition $M_1=\bigcup M_{12}(w)xM_{13}(v)$ with 
$x\in W_{M_{12}(w)}\backslash W_{M_1} / W_{M_{13}(v)}$. Using the
action of $M_{12}(w)$ (which is multiplication on the left)  we can
choose  representatives for the orbits $xnvP_3$
with $x$ as above, and $n\in N_1/N_{13}(v).$ 

Elements $n_{12}\in N_{12}$ satisfy $x^{-1}n_{12}x\in N_1$, so 
\[
n_{12}xnvP_3=x(x^{-1}n_{12}x)nvP_3,
\]
and we can take representatives of the orbits  in 
\[
x\big[x^{-1}N_{12}(w)x\backslash N_1/N_{13}(v)\big] vP_3.
\]
Elements $m=xm_{13}x^{-1}\in M_{12}(w)\cap xM_{13}(v)x^{-1}$ satisfy
\[\indent\indent\indent
xm_{13}x^{-1}xnvP_3=x(m_{13}nm_{13}^{-1})v(v^{-1}m_{13}v)P_3=x(m_{13}nm_{13}^{-1})vP_3.
\]
In conclusion, orbits of $G$ acting diagonally on $G/P_1\times G/P_2\times G/P_3$ have
representatives of the form
\[
P_1\times wxP_2\times qvP_3
\]
where $q\in\big(x^{-1}N_{12}(w)x\big)\backslash N_1/N_{13}(v) $ are
orbit representatives under  the adjoint action of
$x^{-1}M_{12}(w)x\cap M_{13}(v)$.\newline

\item 
Since $N_1$ is nilpotent, we can carry out the
  calculations in the Lie algebra. We use the filtrations coming from levels of $\la_i$ and their respective conjugates under $v,w$ and $x$ to keep track of
  actions of nilpotent groups. The (known) lists of
  representations of reductive groups on their irreducible
  representations with finitely many orbits come into play. 
}

\end{enumerate}

\section{Sample Computations}\label{compute}

Let us begin with a lemma which will reduce the number of cases we need to check.

\begin{lemma}\label{lemma}
	If the diagonal $G$-action on $G/P_1\times G/P_2\times G/P_3$ has infinitely many orbits, then so does the diagonal $G$-action on $G/Q_1\times G/Q_2\times G/Q_3$ for $Q_i \subset P_i$. Furthermore, if the second action has an open orbit, then so does the first.
\end{lemma}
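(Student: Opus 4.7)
\emph{Proof plan.} The plan is to exploit the natural $G$-equivariant projection
\[
\pi \colon G/Q_1 \times G/Q_2 \times G/Q_3 \longrightarrow G/P_1 \times G/P_2 \times G/P_3,
\]
sending $(g_1 Q_1, g_2 Q_2, g_3 Q_3)$ to $(g_1 P_1, g_2 P_2, g_3 P_3)$; this is well defined because each $Q_i \subset P_i$. Each factor $G/Q_i \to G/P_i$ is a smooth surjective morphism with fibre $P_i/Q_i$, so $\pi$ itself is smooth and surjective, and in particular open.

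For the first statement I will argue the contrapositive: that finitely many orbits of $G$ on $G/Q_1\times G/Q_2\times G/Q_3$ force finitely many orbits on $G/P_1\times G/P_2\times G/P_3$. By $G$-equivariance the image under $\pi$ of any $G$-orbit lies in a single $G$-orbit downstairs, and by surjectivity every orbit downstairs is hit. The assignment sending the orbit of $x$ upstairs to the orbit containing $\pi(x)$ downstairs is therefore a surjection between orbit sets, giving the desired bound on cardinalities.

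For the second statement, I would suppose that $U \subset G/Q_1\times G/Q_2\times G/Q_3$ is an open $G$-orbit. Openness of $\pi$ then makes $\pi(U)$ a nonempty open subset of the irreducible variety $G/P_1\times G/P_2\times G/P_3$, while $G$-equivariance places $\pi(U)$ inside a single orbit $\mathcal{O}$. Writing $\mathcal{O} = Y \cap V$ with $Y$ closed and $V$ open (orbits are locally closed), the inclusion $\pi(U) \subset Y$ forces $Y$ to be the whole ambient variety by irreducibility, whence $\mathcal{O} = V$ is open. The only mild subtlety is this last step, namely that a locally closed orbit containing a nonempty open subset of an irreducible variety is itself open; once that standard fact is invoked, both parts of the lemma follow immediately, so I do not expect any genuine obstacle.
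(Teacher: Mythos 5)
Your proof is correct, and its first half is essentially the paper's: both use the $G$-equivariant surjection $\pi$ and the induced surjection on orbit sets to transfer finiteness. For the open-orbit statement, however, you argue in the opposite direction from the paper, and the difference matters. The paper works with preimages: it claims that $\pi^{-1}(\mathcal{O}_P)$ is a union of orbits all of the same dimension, that an open orbit $\mathcal{O}_Q$ inside it must therefore be the entire preimage, and then concludes that $\mathcal{O}_P$ is open from the quotient topology. You instead push forward: since $\pi$ is smooth and surjective, hence open, $\pi(U)$ is a nonempty open subset of the irreducible target contained in a single orbit $\mathcal{O}$, and a locally closed orbit whose closure is everything must be open. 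Your route is not only different but more robust, because the paper's intermediate claims can fail as stated: for $G=SL_2$ with $(Q_1,Q_2,Q_3)=(B,B,B)$ mapping to $(P_1,P_2,P_3)=(B,B,G)$, the preimage of the open orbit is the set of triples $(x,y,z)\in(\mathbb{P}^1)^3$ with $x\neq y$, a union of orbits of dimensions $3$, $2$ and $2$, so the orbits in a preimage need not be equidimensional and the open orbit need not be the full preimage --- yet the lemma's conclusion still holds, exactly as your forward-image argument shows. What the paper's formulation buys, when it applies, is a description of the open orbit upstairs as a fibration over the open orbit downstairs; what yours buys is a proof that needs nothing beyond openness of $\pi$ and the one standard fact you flag yourself, that orbits of an algebraic group are locally closed (Chevalley), which you should cite explicitly to make the argument complete.
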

\begin{proof}
	The map $G/Q_1\times G/Q_2\times G/Q_3$ onto $G/P_1\times G/P_2\times G/P_3$ is $G$-equivariant. Therefore, the fibre over an orbit is the union of orbits. By surjectivity, the first statement follows.
    
    The preimage of an orbit $\mathcal{O}_P$ in $G/P_1\times G/P_2\times G/P_3$ is a union of orbits (by $G$-equivariance) which all have the same dimension (since fibres of this quotient map are equidimensional). Suppose that the fibre over $\mathcal{O}_P$ contains some open orbit $\mathcal{O}_Q$.  Since open orbits are dense (and hence unique), $\mathcal{O}_Q$ must be the full preimage of $\mathcal{O}_P$. Therefore, $\mathcal{O}_P$ is open by the quotient topology. 
\end{proof}

We start by checking the maximal cases. When they have infinitely many orbits, so do the smaller parabolic cases by Lemma \ref{lemma}. We cannot however conclude anything about these smaller cases based on a finite maximal case. These have to be checked separately (see Section \ref{finite}).
We begin with the following maximal choices for $(P_1,P_2,P_3)$ not eliminated by the dimension count:
\[(B_3,B_3,B_3),(B_3,B_3,C_3),(C_3,C_3,B_3), (B_3,B_3,A_1(l)\times A_2(s)),\] \[(B_3,B_3,A_2(l)\times A_1(s)), (B_3,C_3,A_1(l)\times A_2(s)),(B_3,C_3,A_2(l)\times A_1(s))\] \[(C_3,C_3,C_3),(C_3,C_3, A_1(l)\times A_2(s)),(C_3,C_3, A_2(l)\times A_1(s)).\]

To fix the notation for the computations that follow, label the Dynkin diagram for $F_4$ from the long roots to the short roots using the simple roots:
\[\varepsilon_2 - \varepsilon_3\text{ , } \varepsilon_3 - \varepsilon_4 \text{ , } \varepsilon_4\text{ , } \frac{1}{2}(\varepsilon_1 - \varepsilon_2 - \varepsilon_3 - \varepsilon_4)\]
where $\rho = (\frac{11}{2},\frac{5}{2},\frac{3}{2},\frac{1}{2})$  is the sum of the fundamental weights. We distinguish between $A_2(l)\times A_1(s)$ (with $A_2$ corresponding to the long roots) and $A_1(l)\times A_2(s)$ (with $A_2$ given by the short roots).

\subsection{Infinitely many orbits}\label{infinite}

The triple $B_3$ case and the triple $C_3$ case are shown to have infinitely many orbits in \cite{D}. This leaves 8 more triples to check. Four of these remaining eight cases have infinitely many orbits:

\begin{proposition}\label{infinitemax}
The following maximal parabolic cases 
\[(B_3,B_3,B_3), (B_3,B_3,A_1(l)\times A_2(s)),(B_3,B_3,A_2(l)\times A_1(s)),\] \[(C_3,C_3,C_3),(C_3,C_3, A_1(l)\times A_2(s)),(C_3,C_3, A_2(l)\times A_1(s)),\] 
have infinitely many orbits.
\end{proposition}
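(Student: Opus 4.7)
The plan is to separate the six cases into two groups. The first group consists of the two ``triple'' cases $(B_3,B_3,B_3)$ and $(C_3,C_3,C_3)$, which I would handle by a direct citation to Devyatov \cite{D}, who classifies precisely when the diagonal $G$-action on $(G/P)^3$ has finitely many orbits for $P$ a maximal parabolic; neither of these appears on the finite list. The second group consists of the four ``mixed'' cases $(B_3,B_3,A_1(l)\times A_2(s))$, $(B_3,B_3,A_2(l)\times A_1(s))$, $(C_3,C_3,A_1(l)\times A_2(s))$, and $(C_3,C_3,A_2(l)\times A_1(s))$, which require running the algorithm of Section~\ref{algorithm}.

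For each mixed case, the strategy is to exhibit a pair $(w,v) \in W_{P_1}\backslash W/W_{P_2} \times W_{P_1}\backslash W/W_{P_3}$ for which the stabilizer $S_{12}(w) = P_1 \cap w P_2 w^{-1}$ acts on $\cO(v) = P_1 v P_3$ with infinitely many orbits. By step~(2) of the algorithm, this reduces to a representation-theoretic calculation: decompose $\fn_1/\fn_{13}(v)$ into $\la_1$-graded pieces, conjugate by a representative $x \in W_{M_{12}(w)}\backslash W_{M_1}/W_{M_{13}(v)}$, and show that on some graded piece the induced adjoint action of the reductive group $(x^{-1}M_{12}(w)x) \cap M_{13}(v)$ has no open orbit. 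The most efficient heuristic is to pick $w$ of maximal length in its double coset, which makes $M_{12}(w)^0$ as small as possible, so that either a dimension comparison or Kac's classification of reductive actions with finitely many orbits on an irreducible representation immediately rules out an open orbit on that graded piece.

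The main obstacle is the combinatorial bookkeeping: each of the four mixed cases produces a sizable number of $(w,v)$ pairs to screen, and for each pair one must decompose $\fn_1/\fn_{13}(v)$ into graded components and evaluate the Levi action on each component. This is precisely what the Sage implementation referenced in the introduction automates; the role of the proof is to record, for each of the four mixed cases, one offending pair $(w,v)$ together with the graded piece on which infinitely many orbits appear, and to verify non-finiteness either by a dimension gap or by a direct continuous invariant. I note that Lemma~\ref{lemma} is of no assistance here: it lets us propagate infinitude from a larger parabolic triple to a smaller one, but the maximal parabolics of $F_4$ involved in these mixed triples are pairwise incomparable by inclusion, so there is no triple ``above'' them from which to import infinitude for free.
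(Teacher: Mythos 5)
Your outer skeleton coincides with the paper's: the two triple cases are disposed of by citing \cite{D} (one nuance: Devyatov classifies \emph{generically transitive} actions, i.e.\ when $(G/P)^k$ has an open orbit, so the inference you need is ``not on his list $\Rightarrow$ no open orbit $\Rightarrow$ infinitely many orbits,'' which should be said explicitly), and the four mixed cases are run through the algorithm with $w,v$ longest representatives --- the paper takes $v,w$ the longest minimal coset representatives and $x$ the identity. The genuine gap is in your main detection mechanism for the mixed cases. You propose to find a single graded piece on which the Levi part of $x^{-1}M_{12}(w)x\cap M_{13}(v)$ has no open orbit, certified by a dimension gap or Kac-type tables. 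In these $F_4$ cases that criterion never fires: the Cartan subalgebra alone contributes four dimensions of scaling, the graded pieces are small, and $\mathcal{M}_0$ carries extra $SL_2$-factors, so each level individually admits a dense orbit. Look at the paper's Example \ref{ex1} for $(B_3,B_3,A_1(l)\times A_2(s))$: the level $\mathcal{N}_{3/2}$ is essentially $\bC^2\otimes\bC^2$ under two $SL_2$'s and the torus, and $\mathcal{N}_3$ (roots $\varepsilon_1,\varepsilon_1\pm\varepsilon_4$) is binary quadratics under $SL_2$ plus scaling --- both have open orbits. The infinitude is a cross-level phenomenon: after $\mathcal{M}_0$ is spent normalizing the $\mathcal{N}_{3/2}$-component (killing $\alpha_2,\alpha_3,\alpha_4$), the actions of $\mathcal{M}_1$ and $\mathcal{M}_2$ cannot remove $\alpha_5,\alpha_6,\alpha_7$, and the surviving coefficients sit on linearly \emph{dependent} roots ($\varepsilon_1$, $\varepsilon_1+\varepsilon_4$, $\varepsilon_1-\varepsilon_4$ among them), so the residual torus cannot rescale them all to $1$ and a continuous invariant survives. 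A levelwise dimension count or table lookup reports an open orbit at every level and misses this entirely.

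You do list ``a direct continuous invariant'' as a fallback, and that is in fact the only mechanism that works here, but it is precisely the part your proposal leaves undeveloped: one must track, on a generic representative spread across several levels, which coefficients the unipotent actions can normalize \emph{without disturbing earlier reductions} (in the example, $x(\varepsilon_2-\varepsilon_3)$-type moves are only usable because they centralize the lower levels), and then compute the rank of the residual torus action on the surviving support. Your observations that $w$ should be taken of maximal length and that Lemma \ref{lemma} is of no help for incomparable maximal parabolics both agree with the paper; but as written, your primary method would stall on all four mixed cases at the step where you search for a graded piece without an open orbit.
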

\begin{proof}
The algorithm from Section \ref{algorithm} gives infinitely many
orbits for  $v$ and $w$ the longest minimal coset representatives and $x$ the identity.
\end{proof}

We now provide a detailed example of how the algorithm is used to show that there are infinitely many orbits. One case is provided; the remaining ones are left to the reader.

\begin{example}\label{ex1}
\normalfont Consider the $(B_3,B_3,A_1(l)\times A_2(s))$ case. We can form a filtration of the roots by taking their dot product against $\gamma = (3,1,1,0)$:\\ 

\underline{$x^{-1}M_{12}(w)x\cap M_{13}(v)$:} \newline

\begin{itemize}[label={}]
\item $\mathcal{M}_0: \pm\varepsilon_4, \pm(\varepsilon_2 - \varepsilon_3), \mathfrak{h}$
\item $\mathcal{M}_1: \varepsilon_2,\varepsilon_3, \varepsilon_2 \pm \varepsilon_4, \varepsilon_3 \pm \varepsilon_4$
\item $\mathcal{M}_2: \varepsilon_2 +\varepsilon_3$\newline
\end{itemize}

\underline{$x^{-1}N_{12}(w)x\backslash N_1/N_{13}(v)$:}\newline

\begin{itemize}[label={}]
\item $\mathcal{N}_{\frac{1}{2}}:\frac{1}{2}(\varepsilon_1 -\varepsilon_2 - \varepsilon_3 \pm \varepsilon_4)$
\item $\mathcal{N}_{\frac{3}{2}}: \frac{1}{2}(\varepsilon_1 +\varepsilon_2 - \varepsilon_3 \pm \varepsilon_4), \frac{1}{2}(\varepsilon_1 -\varepsilon_2 + \varepsilon_3 \pm \varepsilon_4)$
\item $\mathcal{N}_{2}:\varepsilon_1 - \varepsilon_2 , \varepsilon_1 - \varepsilon_3$
\item $\mathcal{N}_{3}:\varepsilon_1, \varepsilon_1 \pm \varepsilon_4$
\item $\mathcal{N}_{4}:\varepsilon_1 + \varepsilon_2 , \varepsilon_1 + \varepsilon_3 $\newline
\end{itemize}

We restrict our attention to orbits with representatives of the form:
\[\alpha_1x(\frac{1}{2}(\varepsilon_1 +\varepsilon_2 - \varepsilon_3 + \varepsilon_4)) + \alpha_2x(\frac{1}{2}(\varepsilon_1 +\varepsilon_2 - \varepsilon_3 - \varepsilon_4)) +\]
\[ \alpha_3x(\frac{1}{2}(\varepsilon_1 -\varepsilon_2 + \varepsilon_3 + \varepsilon_4)) + \alpha_4x(\frac{1}{2}(\varepsilon_1 -\varepsilon_2 + \varepsilon_3 - \varepsilon_4)) +  \]
\[\alpha_5x(\varepsilon_1) +\alpha_6x(\varepsilon_1 + \varepsilon_4)+ \alpha_7x(\varepsilon_1 - \varepsilon_4)+\alpha_8x(\varepsilon_1 + \varepsilon_2)+\alpha_9x(\varepsilon_1 + \varepsilon_3)\]
where $\alpha_i\neq 0$ are complex coefficients and $x(\beta)$ is a root vector corresponding to the root $\beta$. We are only using $\beta$ from $\mathcal{N}_{\frac{3}{2}},\mathcal{N}_3,\mathcal{N}_4$.

Observe that we can: 
\begin{itemize}
\item set $\alpha_2 = 0$ using the action of $x(-\varepsilon_4)$ on $\alpha_1x(\frac{1}{2}(\varepsilon_1 +\varepsilon_2 - \varepsilon_3 + \varepsilon_4))$ 
\item set $\alpha_3 = 0$ using the action of $x(\varepsilon_4)$ on $\alpha_4x(\frac{1}{2}(\varepsilon_1 -\varepsilon_2 + \varepsilon_3 - \varepsilon_4))$
\item set $\alpha_4=0$ with $x(\varepsilon_3-\varepsilon_2)$ acting on $\alpha_1x(\frac{1}{2}(\varepsilon_1 +\varepsilon_2 - \varepsilon_3 + \varepsilon_4))$
\end{itemize}

There was a choice in how we used each root in $\mathcal{M}_0$. However, in each possibility we can arrange for exactly three of the $\alpha_1,\alpha_2,\alpha_3$ or $\alpha_4$ to be zero. This choice will not affect the final conclusion.

We now notice that the action of $\mathcal{M}_1$ and $\mathcal{M}_2$ cannot eliminate $\alpha_5,\alpha_6$ or $\alpha_7$. Therefore, we have shown that any further restriction of the representative above will contain roots which are linearly dependent. Hence the Cartan subalgebra cannot be used to make all remaining coefficients equal to $1$, and we have infinitely many orbits as a result. 
\end{example}

\begin{corollary}\label{nonmax1}
The following list of non-maximal parabolic cases 
\[(B_3,B_3,B_2),(B_3,B_3,A_2), (B_3,B_3, A_1\times A_1),\]
\[(C_3,C_3,B_2),(C_3,C_3,A_2),(C_3,C_3, A_1\times A_1),\]
have infinitely many orbits for any choice of $A_2$ and $A_1\times A_1$.
\end{corollary}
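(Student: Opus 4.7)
My plan is to apply Lemma \ref{lemma} case by case, using as reference the six infinite maximal triples supplied by Proposition \ref{infinitemax}. For each non-maximal triple $(Q_1,Q_2,Q_3)$ listed in the corollary, the task is to exhibit (up to permutation) an infinite triple $(P_1,P_2,P_3)$ from Proposition \ref{infinitemax} with $Q_i\subset P_i$; the lemma then transfers infiniteness from $(P_1,P_2,P_3)$ to $(Q_1,Q_2,Q_3)$. Since standard parabolics are indexed by subsets $S\subset\{1,2,3,4\}$ of simple roots in the $F_4$ diagram and $P_{S_1}\subset P_{S_2}$ if and only if $S_1\subset S_2$, the entire argument reduces to a finite combinatorial check.

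Concretely, I would first record the Levi-defining subsets: $B_3=\{1,2,3\}$, $C_3=\{2,3,4\}$, $A_1(l)\times A_2(s)=\{1,3,4\}$, $A_2(l)\times A_1(s)=\{1,2,4\}$, as well as $B_2=\{2,3\}$, $A_2(l)=\{1,2\}$, $A_2(s)=\{3,4\}$, and the three embeddings $\{1,3\},\{1,4\},\{2,4\}$ of $A_1\times A_1$. Many containments are immediate: $B_2,A_2(l)\subset B_3$ handles $(B_3,B_3,B_2)$ and $(B_3,B_3,A_2(l))$ via $(B_3,B_3,B_3)$, while $B_2,A_2(s)\subset C_3$ handles $(C_3,C_3,B_2)$ and $(C_3,C_3,A_2(s))$ via $(C_3,C_3,C_3)$. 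The remaining cases require routing through a mixed infinite triple: $A_2(s)=\{3,4\}\subset\{1,3,4\}$ settles $(B_3,B_3,A_2(s))$ via $(B_3,B_3,A_1(l)\times A_2(s))$, and $A_2(l)=\{1,2\}\subset\{1,2,4\}$ settles $(C_3,C_3,A_2(l))$ via $(C_3,C_3,A_2(l)\times A_1(s))$. For each of the three embeddings of $A_1\times A_1$, I would verify that it is contained in at least one of the four relevant maximal Levi subsets, so that an infinite reference triple is available.

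I do not anticipate a serious obstacle; the only point of care is that the dominating triple must come from Proposition \ref{infinitemax} rather than from the finite triples $(B_3,C_3,P_{max})$ of the main theorem. For instance, the containment $\{2,4\}\subset\{2,3,4\}=C_3$ is valid combinatorially but useless, since $(B_3,B_3,C_3)$ is \emph{finite}; one must instead use $\{2,4\}\subset\{1,2,4\}=A_2(l)\times A_1(s)$ together with the infinite triple $(B_3,B_3,A_2(l)\times A_1(s))$. Once this pitfall is avoided, every non-maximal triple listed in the corollary admits a dominating infinite maximal triple from Proposition \ref{infinitemax}, and Lemma \ref{lemma} completes the proof.
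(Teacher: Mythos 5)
Your proposal is correct and is exactly the paper's argument: the paper's proof of Corollary \ref{nonmax1} is the one-line citation of Proposition \ref{infinitemax} together with Lemma \ref{lemma}, and you have simply made explicit the routine containment checks $Q_i\subset P_i$ among standard parabolics (indexed by subsets of simple roots) that the paper leaves implicit. Your observation that one must route through an \emph{infinite} dominating triple --- e.g.\ using $\{2,4\}\subset\{1,2,4\}$ rather than $\{2,4\}\subset\{2,3,4\}$, since $(B_3,B_3,C_3)$ is finite --- is the right point of care, and all of your subset identifications and containments check out.
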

\begin{proof}
This follows from Proposition \ref{infinitemax} and Lemma \ref{lemma}
\end{proof}

\subsection{Finitely many  orbits}\label{finite}

While only one choice of $(v,w,x)$ was needed to show $(P_1,P_2,P_3)$ had infinitely many orbits, the finite case requires far more verification. The four remaining maximal cases do indeed have finitely many orbits. 

\begin{proposition}\label{finitemax}
The following triples of parabolic subgroups
\[(B_3,C_3, A_1(l)\times A_2(s)),(B_3,C_3,B_3),\]\[ (B_3,C_3, A_2(l)\times A_1(s)), (B_3,C_3,C_3),\]
are the only maximal cases with finitely many orbits.
\end{proposition}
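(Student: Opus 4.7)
The plan is to apply the algorithm from Section \ref{algorithm} to each of the four candidate triples $(B_3,C_3,B_3)$, $(B_3,C_3,C_3)$, $(B_3,C_3,A_1(l)\times A_2(s))$, $(B_3,C_3,A_2(l)\times A_1(s))$ and verify that for every triple $(w,v,x)$ with $w\in W_{P_1}\backslash W/W_{P_2}$, $v\in W_{P_1}\backslash W/W_{P_3}$, and $x\in W_{M_{12}(w)}\backslash W_{M_1}/W_{M_{13}(v)}$, the action of $x^{-1}M_{12}(w)x\cap M_{13}(v)$ on the double quotient $x^{-1}N_{12}(w)x\backslash N_1/N_{13}(v)$ has finitely many orbits. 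Note that by the dimension count in Section \ref{dimension} together with Proposition \ref{infinitemax}, these four are the only maximal triples not already ruled out, so the content of the proposition is the positive assertion of finiteness for these cases.

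The key steps in order are as follows. First, for each fixed $(P_1,P_2,P_3)$, enumerate the relevant double cosets in $W_{P_1}\backslash W/W_{P_2}$ and $W_{P_1}\backslash W/W_{P_3}$, using minimal-length representatives. Second, for each pair $(w,v)$, compute the Levi and unipotent pieces of $S_{12}(w)$ and $S_{13}(v)$, together with the further decomposition indexed by $x$. Third, install on $\fk n_1$ the filtration coming from the levels of $\la_1,w\la_2,v\la_3$ (as in Example \ref{ex1} with the functional $\gamma$), so that the associated graded is a representation of the reductive group $x^{-1}M_{12}(w)x\cap M_{13}(v)$ and the unipotent part acts by strictly upper triangular terms. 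Fourth, on each graded piece, invoke the classification of reductive group actions on vector spaces with finitely many orbits (the Sato--Kimura/Kac lists and their extensions) to confirm finiteness level by level; then show that the unipotent corrections do not create a continuous family, by noting that they only allow shifting representatives into lower filtration pieces and recursing.

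The main obstacle will be the combinatorial size of the case check: there are many $(w,v,x)$ triples even after symmetry, and for each one the graded module of the induced Levi action must be identified with an item from the finite-orbit lists. In practice, several cases come close to the boundary and require either a dimension count (the Levi has an open orbit and we then check dimension of generic stabilizers to rule out a one-parameter family) or an explicit slice argument that mimics the end of Example \ref{ex1} but with the opposite conclusion --- namely, that the remaining coefficients can be normalized by the toral and unipotent action. This bookkeeping is exactly what the Sage code referenced in the introduction automates; the proof therefore consists of running the algorithm of Section \ref{algorithm} on each $(w,v,x)$ for each of the four triples, and observing that the output is a finite list of distinguished representatives. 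In the next section we illustrate the non-obvious parts of the computation for one representative triple, leaving the remaining bookkeeping to the computer implementation.
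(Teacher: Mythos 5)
Your proposal takes essentially the same route as the paper: the authors likewise prove finiteness by running the algorithm of Section \ref{algorithm} over every triple $(v,w,x)$ (e.g.\ 73 triples in the $(B_3,C_3,A_2(l)\times A_1(s))$ case), omit the bulk of the verification in favor of one worked example plus the computer implementation, and obtain the ``only'' clause from Proposition \ref{infinitemax}. Your elaboration of the filtration, the graded Levi action, and the known finite-orbit lists simply makes explicit step (4) of the paper's algorithm rather than constituting a different argument.
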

\begin{proof}
To show this, we have verified that every choice of triple $(v,w,x)$
yields finitely many orbits. We omit the details, but provide one example
below. By Proposition \ref{infinitemax}, all other
parabolic cases have infinitely many orbits.
\end{proof}

In the $(B_3,C_3,A_2(l)\times A_1(s))$ case for example, there are 73 of these $(v,w,x)$ triples to check. They are not difficult to verify, so we leave it up the reader to apply the algorithm to any subcase that is of interest. The next example is dedicated to computing one of these subcases.

\begin{example}
\normalfont Consider the $(B_3,C_3,A_2(l)\times A_1(s))$ case. Let $w$ and $v$ be the elements of the Weyl group such that $w$ conjugates $\rho$ into $(-\frac{5}{2},\frac{11}{2},\frac{3}{2},\frac{1}{2})$ and $v$ conjugates $\rho$ into $(\frac{1}{2},\frac{9}{2},\frac{7}{2},\frac{5}{2})$. For this example, $x$ is chosen to be the identity. Filter the roots by taking their dot product against $\gamma = (\frac{7}{2},\frac{3}{2},\frac{1}{2},\frac{1}{2})$: 
\newline

\underline{$x^{-1}M_{12}(w)x\cap M_{13}(v)$:}\newline

\begin{itemize}[label={}]
\item $\mathcal{M}_0: \pm(\varepsilon_3 - \varepsilon_4), \mathfrak{h}$
\item $\mathcal{M}_{\frac{1}{2}}: \varepsilon_3, \varepsilon_4$
\item $\mathcal{M}_1: \varepsilon_3 + \varepsilon_4, \varepsilon_2 - \varepsilon_3, \varepsilon_2 - \varepsilon_4$
\item $\mathcal{M}_{\frac{3}{2}}: \varepsilon_2$
\item $\mathcal{M}_2: \varepsilon_2 + \varepsilon_3, \varepsilon_2 + \varepsilon_4$\\
\end{itemize}

\underline{$x^{-1}N_{12}(w)x\backslash N_1/N_{13}(v)$:}\newline

\begin{itemize}[label={}]
\item $\mathcal{N}_{\frac{1}{2}}: \frac{1}{2}(\varepsilon_1 -\varepsilon_2 - \varepsilon_3 - \varepsilon_4)$
\item $\mathcal{N}_1: \frac{1}{2}(\varepsilon_1 -\varepsilon_2 + \varepsilon_3 - \varepsilon_4), \frac{1}{2}(\varepsilon_1 -\varepsilon_2 - \varepsilon_3 + \varepsilon_4)$
\item $\mathcal{N}_2: \varepsilon_1 - \varepsilon_2$
\item $\mathcal{N}_3: \varepsilon_1 - \varepsilon_3, \varepsilon_1 - \varepsilon_4$\newline
\end{itemize}

A generic representative of an orbit is of the form:

\[ \alpha_1x(\frac{1}{2}(\varepsilon_1 -\varepsilon_2 - \varepsilon_3 - \varepsilon_4)) + \alpha_2x(\frac{1}{2}(\varepsilon_1 -\varepsilon_2 + \varepsilon_3 - \varepsilon_4)) +\]
\[ \alpha_3x(\frac{1}{2}(\varepsilon_1 -\varepsilon_2 - \varepsilon_3 + \varepsilon_4)) + \alpha_4x(\varepsilon_1 - \varepsilon_2) + \alpha_5x(\varepsilon_1 - \varepsilon_3) +\alpha_6x(\varepsilon_1 - \varepsilon_4)\]\\
where $\alpha_i$ are complex coefficients and $x(\beta)$ is a root vector corresponding to the root $\beta$.

Observe that if $\alpha_1 \ne 0$, we can: 
\begin{itemize}
\item set $\alpha_2 = 0$ using the action of $x(\varepsilon_3)$ on $\alpha_1x(\frac{1}{2}(\varepsilon_1 -\varepsilon_2 - \varepsilon_3 - \varepsilon_4))$ 
\item set $\alpha_3 = 0$ using the action of $x(\varepsilon_4)$ on $\alpha_1x(\frac{1}{2}(\varepsilon_1 -\varepsilon_2 - \varepsilon_3 - \varepsilon_4)$
\end{itemize}
On the other hand if $\alpha_1 = 0$, we can set at least one of $\alpha_2$ or $\alpha_3$ to $0$ by using the reductive part of $x^{-1}M_{12}(w)x\cap M_{13}(v)$ (i.e. part of $\mathcal{M}_0$).\newline

Similarly, if $\alpha_4 \ne 0$, we can
\begin{itemize}
\item set $\alpha_5 = 0$ using the action of $x(\varepsilon_2 - \varepsilon_3)$ on $\alpha_4x(\varepsilon_1 - \varepsilon_2)$
\item set $\alpha_6 = 0$ using the action of $x(\varepsilon_3 - \varepsilon_4)$ on $\alpha_4x(\varepsilon_1 - \varepsilon_2)$ 
\end{itemize}
Notice that we can do this without affecting our previous reductions because $x(\varepsilon_2 - \varepsilon_3)$ and $x(\varepsilon_3 - \varepsilon_4)$ centralize the roots in $\mathcal{N}_{1/2}$ and $\mathcal{N}_1$.

Finally, if $\alpha_4 = 0$, then we can set at least one of $\alpha_5$ or $\alpha_6$ to $0$ by utilizing the roots in the reductive part of $x^{-1}M_{12}(w)x\cap M_{13}(v)$ that have not already been used.

Therefore, we have shown that any orbit has a representative with at most two non-zero coefficients $\alpha_i$. This means that the roots that appear in this representative are linearly independent. Hence the Cartan subalgebra can be used to make these coefficients $1$. There are in fact $16$ orbits for this particular choice of $(w,v,x)$.
\end{example}

At this point, the only unclassified non-maximal cases are 
the ones discussed in the next proposition:

\begin{proposition}\label{nonmax2}
The following non-maximal cases 
\[(B_3,C_3,B_2),(B_3,C_3,A_2),(B_3,C_3, A_1\times A_1),\]
have infinitely many orbits for any choice of $A_2$ and $A_1\times A_1$.
\end{proposition}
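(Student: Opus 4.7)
The plan is to adapt the strategy of Proposition \ref{infinitemax} and Example \ref{ex1}: for each of the three triples I would apply the algorithm of Section \ref{algorithm} to a specific choice of double coset representatives $(w,v,x)$ and exhibit a continuous family of inequivalent orbit representatives. First I would observe that Lemma \ref{lemma} is not directly applicable here, since each non-maximal $P_3$ in the proposition is contained in both of the maximal parabolics with Levi of type $B_3$ and $C_3$, and these maximal cases give \emph{finitely} many orbits by Proposition \ref{finitemax}. So the finiteness of a containing case provides no shortcut, and a direct computation is unavoidable.

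For each of the three cases I would take $w$ and $v$ to be the longest minimal representatives in $W_{P_1}\backslash W/W_{P_2}$ and $W_{P_1}\backslash W/W_{P_3}$ respectively, and set $x = e$, mirroring the setup of Example \ref{ex1}. A suitable weight $\gamma \in \fh$ is then chosen to produce the filtrations $\mathcal{M}_\bullet$ of $x^{-1}M_{12}(w)x \cap M_{13}(v)$ and $\mathcal{N}_\bullet$ of $x^{-1}N_{12}(w)x \backslash N_1 / N_{13}(v)$. The structural point I would exploit is that weakening $P_3$ from maximal ($B_3$ or $C_3$) to non-maximal ($B_2$, $A_2$, or $A_1 \times A_1$) enlarges $N_3$ and shrinks the Levi $M_{13}(v)^0$, so $N_1/N_{13}(v)$ acquires new top-degree root spaces while the reductive stabilizer loses generators. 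Writing a generic representative as $\sum_i \alpha_i x(\beta_i)$ over the top pieces of $\mathcal{N}_\bullet$, this imbalance is precisely what I expect to force infinitely many orbits.

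The hard part is the resulting book-keeping. For every generator of $\mathcal{M}_0 \cup \mathcal{M}_{\ge 1}$ and every lower-degree root vector in $\mathcal{N}_\bullet$, one records which single coefficient $\alpha_i$ it can be used to kill. One must then verify that after all such reductions the remaining nonzero $\alpha_i$ correspond to root vectors $\beta_i$ that are linearly dependent in $\fh^*$, so that the Cartan cannot normalize every surviving coefficient to $1$ and a continuous moduli of orbits persists. The individual reductions are routine, but tracking compatibility---that using one root to eliminate $\alpha_j$ does not revive a previously killed $\alpha_i$, as in Example \ref{ex1}'s use of centralizers---requires care, and one may need to try more than one weight $\gamma$ before a clean obstruction emerges. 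Once this is accomplished for a single $(w,v,x)$ in each of the three triples the proposition follows; in practice the entire calculation can be offloaded to the SageMath worksheet cited in the introduction.
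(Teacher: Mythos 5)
Your proposal takes essentially the same route as the paper's proof, which likewise notes that Lemma \ref{lemma} is inapplicable (all the maximal triples enclosing these cases are finite by Proposition \ref{finitemax}) and then asserts that a direct computation with the algorithm of Section \ref{algorithm}---filtration by a weight $\gamma$, reduction of a generic representative, and a linear-dependence obstruction to normalizing the surviving coefficients---shows infinitely many orbits. One cosmetic quibble: not every choice of $A_2$ or $A_1\times A_1$ lies in \emph{both} the $B_3$ and $C_3$ maximal parabolics, but each lies in some maximal parabolic whose triple is finite, so your conclusion that the lemma gives no shortcut still stands.
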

\begin{proof}
  Lemma \ref{lemma} cannot be applied in this situation, but one can
  check by direct computation that they all have infinitely many
  orbits. 
\end{proof}
\subsection{Cases with open orbits}

The existence of an open orbit for the $G$-diagonal action on
$G/P_1\times G/P_2\times G/P_3$ is important when considering
compactification questions. When  there are finitely many orbits,  an open orbit
necessarily exists. Any case for which there are infinitely many orbits eliminated using the
dimension count (from Section \ref{dimension}) cannot have open
orbits. The remaining cases with an open orbit are listed in the proposition below. This is consistent with
Lemma \ref{lemma}. 

\begin{proposition}
The following infinite orbit cases 
\[ (B_3,C_3,B_2), (B_3,B_3,A_2(s)),(B_3,B_3,A_1(l)\times A_2(s)),\] \[(C_3,C_3,A_2(l)),(C_3,C_3,A_2(l)\times A_1(s)),\]
have an open orbit.
\end{proposition}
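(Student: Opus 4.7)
My plan is to combine Lemma \ref{lemma} with a direct application of the algorithm of Section \ref{algorithm}, in the spirit of Example \ref{ex1} and the example preceding Proposition \ref{nonmax2}. As standard parabolic subgroups of $F_4$, one has $A_2(s)\subsetneq A_1(l)\times A_2(s)$ and $A_2(l)\subsetneq A_2(l)\times A_1(s)$ (in each pair the smaller parabolic arises by removing a single simple root from the Levi), so by the second half of Lemma \ref{lemma} it suffices to prove open-orbit existence for the three triples
\[
(B_3,C_3,B_2),\quad (B_3,B_3,A_2(s)),\quad (C_3,C_3,A_2(l));
\]
the other two triples in the proposition then follow immediately.

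For each of these three I would run the algorithm taking $w,v$ to be the longest minimal double coset representatives in $W_{P_1}\backslash W/W_{P_2}$ and $W_{P_1}\backslash W/W_{P_3}$. Since $P_1wP_2/P_2$ and $P_1vP_3/P_3$ are then dense in $G/P_2$ and $G/P_3$, an open $G$-orbit on $G/P_1\times G/P_2\times G/P_3$ exists if and only if, for some double coset representative $x$, the reduced action of $x^{-1}M_{12}(w)x\cap M_{13}(v)$ on $x^{-1}N_{12}(w)x\backslash N_1/N_{13}(v)$ has an open orbit. I would try $x=e$ first and sweep through the remaining representatives only if needed. In each case, one introduces a grading $\gamma\in\fh$, tabulates the filtration levels $\mathcal{M}_k$ and $\mathcal{N}_k$, writes a generic representative $\sum\alpha_i x(\beta_i)$, and successively eliminates $\alpha_i$'s using root vectors in $\mathcal{M}_{>0}$ (which move strictly upward in the filtration), then $\mathcal{M}_0$, then the Cartan $H$.

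The decisive point, and the one worth contrasting with Example \ref{ex1}, is the following. In that infinite-orbit calculation, after all possible reductions the surviving root weights were \emph{linearly dependent} in $\fh^*$, so $H$ could not simultaneously rescale every remaining $\alpha_i$ to $1$ and a positive-dimensional family of normal forms survived. For the triples in this proposition I expect that, although some residual continuous parameters persist (hence the total orbit count remains infinite), a sufficiently generic representative has stabilizer in $S_{12}(w)$ of the minimal possible dimension $\dim S_{12}(w)-\dim\mathcal{O}(v)$; equivalently, the differential of the $S_{12}(w)$-action at the chosen generic point $p$ is surjective onto $T_p\mathcal{O}(v)$. A single such point certifies an open orbit. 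The main obstacle is purely combinatorial bookkeeping: for each of the three triples one must enumerate the filtration levels, choose a consistent sequence of reductions (as in Example \ref{ex1}, the choices matter but the conclusion does not), and verify surjectivity of the infinitesimal action at the end. This is routine but tedious, and I would cross-check each computation against the Sage worksheet referenced in the introduction to guard against miscounts among the many roots of $F_4$.
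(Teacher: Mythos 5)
Your overall skeleton is the paper's: the published proof also takes $w,v$ (and $x$) to be the \emph{longest} coset representatives and then verifies, case by case, that the reduced action of $x^{-1}M_{12}(w)x\cap M_{13}(v)$ on generic representatives (all coefficients nonzero) of $x^{-1}N_{12}(w)x\backslash N_1/N_{13}(v)$ has finitely many orbits, one of which must then be open. Two of your deviations are fine and even attractive: the reduction of the five triples to three via the second half of Lemma \ref{lemma} is valid (as standard parabolics, $A_2(s)\subset A_1(l)\times A_2(s)$ and $A_2(l)\subset A_2(l)\times A_1(s)$, since the corresponding sets of simple roots are nested), and the paper itself remarks that its list is ``consistent with Lemma \ref{lemma}''; and certifying openness by surjectivity of the differential of the $S_{12}(w)$-action at a single generic point is a sound, lighter-weight substitute for the paper's finite count of generic normal forms (over $\bC$, a locally closed smooth orbit of full dimension is open).

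The genuine error is your handling of $x$. An open $G$-orbit, being dense, must lie in the unique open stratum of the decomposition in Section \ref{algorithm}, and that stratum requires the longest representative in \emph{all three} double coset spaces: longest $w$, longest $v$, \emph{and} longest $x\in W_{M_{12}(w)}\backslash W_{M_1}/W_{M_{13}(v)}$ -- this is exactly why the paper's proof specifies that ``$v,w,x$ are the longest choices of respective coset representatives.'' Your claimed equivalence ``an open $G$-orbit exists if and only if, for \emph{some} $x$, the reduced action has an open orbit,'' together with the plan to try $x=e$ first, fails in the direction you need. Unless the double coset space is a single point, the stratum attached to $x=e$ lies over the \emph{closed} minimal cell $M_{12}(w)M_{13}(v)\subset M_1$, hence is not open in the triple flag variety; an orbit that is open in the reduced space for such an $x$ has dimension strictly less than $\dim(G/P_1\times G/P_2\times G/P_3)$ and certifies nothing -- your procedure could return a false positive. (The forward direction of your equivalence is harmless: an open $G$-orbit does produce an open reduced orbit, but only for the longest $x$.) Replace ``some $x$'' by ``the longest $x$'' and run your tangent-space check there; with that correction, and with the three case-by-case computations actually carried out (which you, like the paper, delegate to the machine), the argument goes through.
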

\begin{proof}
We directly computed case by case that there are finitely many orbits when
${x^{-1}M_{12}(w)x\cap M_{13}(v)}$ acts on a generic representative (all coefficients nonzero) of $x^{-1}N_{12}(w)x\backslash N_1/N_{13}(v)$, where $v,w,x$ are the longest choices of respective coset representatives. 
\end{proof}

\end{document}